\newcommand{\subscript}[2]{$#1 _ #2$}
\lstdefinestyle{customc}{
	belowcaptionskip=1\baselineskip,
	breaklines=true,
	frame=L,
	xleftmargin=\parindent,
	language=C,
	showstringspaces=false,
	basicstyle=\footnotesize\ttfamily,
	keywordstyle=\bfseries\color{green!40!black},
	commentstyle=\itshape\color{purple!40!black},
	identifierstyle=\color{blue},
	stringstyle=\color{orange},
}
\lstdefinestyle{customasm}{
	belowcaptionskip=1\baselineskip,
	frame=L,
	xleftmargin=\parindent,
	language=[x86masm]Assembler,
	basicstyle=\footnotesize\ttfamily,
	commentstyle=\itshape\color{purple!40!black},
}
\newcommand{\myspace}{1.5cm}
\theoremstyle{plain}
\newtheorem{theorem}{Theorem}
\newtheorem{corollary}{Corollary}
\newtheorem{lemma}{Lemma}
\theoremstyle{definition}
\newtheorem{definition}{Definition}
\theoremstyle{remark}
\numberwithin{equation}{section}
\DeclareMathOperator{\AGM}{agm}
\DeclareMathOperator{\AM}{AM}
\DeclareMathOperator{\GM}{GM}
\begin{document} %

\title{Means Compatible with Semigroup Laws}

\author{R. Padmanabhan}
\address{
   Department of Mathematics\\
   University of Manitoba\\
   Winnipeg, Manitoba  R3T 2N2\\
   Canada}
\email{padman@cc.umanitoba.ca}
\urladdr{http://home.cc.umanitoba.ca/\~{}padman}

\author{Alok Shukla}
\address{
    Department of Mathematics\\
    University of Manitoba\\
    Winnipeg, Manitoba  R3T 2N2\\
    Canada}
\email{Alok.Shukla@umanitoba.ca}
\urladdr{https://www.math.umanitoba.ca/people/pages/alok-shukla/}

\keywords{arithmetic mean, geometric mean, harmonic mean, arithmetic-geometric mean, compatible group law, loops, medial law.}

\subjclass{Primary: 20N05; Secondary: 26E60}

\date{}

\begin{abstract}
A binary mean operation $ m(x,y) $ is said to be compatible with a semigroup law $ * $,  if $ * $ satisfies the Gauss' functional equation $ m(x,y) * m(x,y) = x * y $ for all $ x, y $. Thus the arithmetic mean is compatible with the group addition in the set of real numbers, while the geometric mean is compatible with the group multiplication in the set of all positive real numbers. Using one of Jacobi's theta functions, Tanimoto \cite{tanimoto2007noveljp}, \cite{tanimoto2007novel} has constructed a novel binary operation $ * $ corresponding to the arithmetic-geometric mean $ \AGM (x,y) $ of Gauss. Tanimoto shows that it is only a loop operation, but not associative. A natural question is to ask if there exist a group law $ * $  compatible with arithmetic-geometric mean. In this paper we prove that there is no semigroup law compatible with $ \AGM $ and hence, in particular, no group law either. Among other things, this explains why Tanimoto's novel operation $ * $ using theta functions must be non-associative.
\end{abstract}

\maketitle

\section{Introduction} \label{sec-intro}

Gauss discovered the arithmetico-geometric mean ($ \AGM $) at the age of $ 15 $. Starting with two positive real numbers $ x $ and $ y $, Gauss considered the sequences $ \{x_n\} $ and $ \{y_n\} $  of arithmetic and geometric means   $$ x_0 =x,  y_0 = y , x_n = \dfrac{x_{n-1}+y_{n-1}}{2},  y_n = \sqrt{x_{n-1} y_{n-1}},  {\text{ for }}   n \geq 1 .$$
Then Gauss defined $ \AGM (x,y) $ to be the common limit of the sequences $ \{x_n\} $ and $ \{y_n\} $, i.e.,
\begin{align}
\AGM (x,y)  = \lim\limits_{n \to \infty} x_n = \lim\limits_{n \to \infty} y_n.
\end{align}
For an engaging historical account on $ \AGM $ and its applications in mathematics readers are referred to  \cite{almkvist1988gauss},\cite{cox1997arithmetic}.

In this paper, we ask if there exist a group law $ * $, which is compatible with $ \AGM $. Before proceeding further we give some definitions relevant to this work.
\begin{definition}[\textbf{Mean}] \label{def:mean}
	Let $ S $ be a set equipped with a binary operation $ m $. It is said that $ m $ is a \textit{mean}, if it satisfies the following 
	\begin{enumerate} [label=(\subscript{M}{{\arabic*}})]
		\item \hspace{\myspace}       $ 	m(x,x) = x $,       
		\item \hspace{\myspace} $ 	m(x,y) = m(y,x) $, 
		\item \hspace{\myspace} $ 	m(x,y) = m(z,y) \implies x = z. $ 
	\end{enumerate}
\end{definition}

\begin{definition}[\textbf{Compatibility of binary operations}] \label{def:compatible}
	Let $ S $ be a set equipped with a binary mean operation $ m $ and another binary operation $ * $. 
	The binary mean operation $ m $, and the binary operation $ * $, are said to be \textit{compatible} with each other,  if $ m(x,y) * m(x,y) = x * y $ for all $ x, y \in S $.
\end{definition}

Here we find conditions on the mean $ m $ which force any compatible operation $ * $ to be a group operation.

Let $ \displaystyle \AM(x,y) = \frac{x+y}{2} $ be the arithmetic mean of $ x,y \in \mathbb{R} $ with $ + $ being the usual addition in $\mathbb{R} $. Then clearly $ \AM(x,y)+ \AM(x,y)=x+y $, therefore, the classical arithmetic mean $ \AM(x,y)$ is compatible with the group law of $ + $ in $\mathbb{R} $, in the sense of Def.~$\ref{def:compatible}  $.  Similarly, the geometric mean $ \GM $ is also compatible with the group law of multiplication in positive reals. Similarly, it can be verified that the harmonic mean $ \displaystyle h(x,y) = \frac{2xy}{x+y} $ is compatible with the group law $\displaystyle x*y = \frac{xy}{x+y} $. It is then natural to consider if there exists any such group operation over $ \mathbb{R}^{+} $, which is compatible with the arithmetic-geometric mean ($ \AGM $) of Gauss. In other words, we want to address the question, if there exists a group operation $ * $, such that $ \AGM(x,y)* \AGM(x,y)=x*y $.  Using one of Jacobi's theta functions, Shinji Tanimoto has successfully constructed a non-associative loop operation $ \star $ (c.f. \cite{tanimoto2007noveljp}, \cite{tanimoto2007novel}, Sec.~$ \ref{subsec:Tanimoto} $ below) that is compatible with $ \AGM $.  However, no group law $ * $  compatible with $ \AGM $ is known to exist. Indeed, we prove that no such group law $ * $ can exist, which is compatible with $\AGM $.

\subsection{A non-associative loop operation compatible with $ \AGM $}\label{subsec:Tanimoto}
Now we recall the binary operation $ \star $ introduced by  Shinji Tanimoto in \cite{tanimoto2007noveljp}, \cite{tanimoto2007novel}. 

\begin{definition}[Tanimoto, \cite{tanimoto2007noveljp}, \cite{tanimoto2007novel}] \label{def:starTanimoto}
	{\it 
		For any two positive numbers $x$ and $y,$ choose a unique $q~(-1 <q<1)$ such that 
		$1 / {\rm agm}(x,y) = \theta^2(q)$. Here,
		 $\theta$ is one of the Jacobi's theta functions:
		 \[
		 \theta(q) = \sum_{n = - \infty}^{+ \infty} q^{n^2} = 1 + 2\sum_{n = 1}^{\infty} q^{n^2}.
		 \]
		Then define
		\begin{eqnarray}
		x \star y =  \theta^2(-q)/\theta^2(q).
		\end{eqnarray} 
	}
\end{definition}
We also recall the following theorems from \cite{tanimoto2007novel}, which describe the properties of the $ \star $ operation. We note that here variables $x, y$ are positive real numbers.
\noindent
\begin{theorem}[Tanimoto, \cite{tanimoto2007novel}]\label{thm:Tanimoto1}
 {\it The operation $\star$ defined above satisfies the following properties.}\\
	~{\bf (A)} $1 \star x = x$ {\rm for all} $x$.  {\it Hence {\rm 1} is the unit element of the operation}. \\
	~{\bf (B)} $x \star x = y \star y$ {\it implies} $x = y$. \\
	~{\bf (C)} $x \star y = {\rm agm} (x, y) \star {\rm agm} (x, y)$. {\it Thus the mean with respect to
		the operation is the $ \AGM $.} 
\end{theorem}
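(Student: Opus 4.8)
The plan is to reduce all three properties to a single classical identity of Gauss relating the arithmetic--geometric mean to Jacobi's theta constants, namely
\[
\AGM\bigl(\theta^2(q),\,\theta^2(-q)\bigr)=1,
\]
valid for $-1<q<1$ (here $\theta(q)$ is the theta-constant usually written $\theta_3$, and $\theta(-q)$ is $\theta_4$). By the homogeneity of $\AGM$ this is equivalent to
\[
\AGM\!\left(1,\ \frac{\theta^2(-q)}{\theta^2(q)}\right)=\frac{1}{\theta^2(q)}.
\]
I would first record this identity, citing the Gauss--Jacobi theory of the AGM and complete elliptic integrals, together with two monotonicity facts I will need: that $q\mapsto\theta^2(q)$ and $q\mapsto\theta^2(-q)/\theta^2(q)$ are strictly monotone bijections of $(-1,1)$ onto $(0,\infty)$ --- which is precisely what makes the ``unique $q$'' in Definition~\ref{def:starTanimoto} well defined --- and that $t\mapsto\AGM(1,t)$ is strictly increasing, hence injective, on $(0,\infty)$.

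The key structural observation, which I would make next, is that the nome $q$ attached to the pair $(x,y)$ depends on $(x,y)$ \emph{only through} $\AGM(x,y)$, while $x\star y=\theta^2(-q)/\theta^2(q)$ depends on $q$ alone; hence $x\star y$ is a function of $\AGM(x,y)$ alone. Property~(C) is then immediate: since $\AGM$ is idempotent, $\AGM\bigl(\AGM(x,y),\AGM(x,y)\bigr)=\AGM(x,y)$, so the pair $\bigl(\AGM(x,y),\AGM(x,y)\bigr)$ carries the same nome $q$ as $(x,y)$ and therefore yields the same $\star$-product.

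For property~(A) I would compute $1\star x$. The defining relation reads $1/\AGM(1,x)=\theta^2(q)$, i.e. $\AGM(1,x)=1/\theta^2(q)$. Comparing this with Gauss's identity in the form $\AGM\!\left(1,\theta^2(-q)/\theta^2(q)\right)=1/\theta^2(q)$ and invoking the injectivity of $t\mapsto\AGM(1,t)$, I obtain $\theta^2(-q)/\theta^2(q)=x$, which is exactly $1\star x=x$. For property~(B) I would compute $x\star x$: since $\AGM(x,x)=x$, the nome satisfies $1/x=\theta^2(q)$, and Gauss's identity gives $\AGM(1,\,x\star x)=1/\theta^2(q)=x$. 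Thus $x\star x=y\star y$ forces $\AGM(1,x\star x)=\AGM(1,y\star y)$, and injectivity of $\AGM(1,\cdot)$ yields $x=y$.

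The substantive content is therefore concentrated entirely in the Gauss--Jacobi identity and the accompanying monotonicity statements; once these are in hand, (A), (B) and (C) are short deductions. The main obstacle I anticipate is not any of the three verifications but the careful justification of the bijectivity of the theta quotients on the full interval $(-1,1)$ --- in particular the behaviour for negative $q$, which is what allows $x\star y$ to range over all of $(0,\infty)$ rather than being confined to $(0,1)$ --- and the corresponding well-definedness of the unique nome underlying the whole construction.
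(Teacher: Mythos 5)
Your proposal is correct, but note that the paper itself offers no proof of Theorem~\ref{thm:Tanimoto1}: the statement is imported verbatim from Tanimoto \cite{tanimoto2007novel} with a citation, so there is no in-paper argument to compare against. Measured against Tanimoto's original proof, your route is essentially the standard one: everything reduces to Gauss's identity $\AGM\bigl(\theta^2(q),\theta^2(-q)\bigr)=1$, which follows from the duplication formulas $\tfrac{1}{2}\bigl(\theta^2(q)+\theta^2(-q)\bigr)=\theta^2(q^2)$ and $\theta(q)\,\theta(-q)=\theta^2(-q^2)$ --- so one AGM step sends the pair for nome $q$ to the pair for nome $q^2$, and $q^{2^n}\to 0$ drives both entries to $\theta^2(0)=1$; note these series identities hold for all $|q|<1$, which together with the symmetry of $\AGM$ disposes of your worry about negative $q$, and after one step the nome is positive anyway. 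Your reduction of (A)--(C) to this identity, via homogeneity of $\AGM$ and the observation that $x\star y$ factors through $\AGM(x,y)$, is sound. Two small refinements: in (B) you do not actually need injectivity of $t\mapsto\AGM(1,t)$ --- your own identity $\AGM(1,x\star x)=x$ applied to both sides of $x\star x=y\star y$ gives $x=y$ outright; and the bijectivity of $q\mapsto\theta^2(-q)/\theta^2(q)$, while true (it is the ratio of a decreasing and an increasing positive function, with limits $\infty$ and $0$ at the endpoints), is not needed for (A)--(C) at all --- it only becomes relevant for the loop properties (D)--(F) of Theorem~\ref{thm:Tanimoto2}, so the ``main obstacle'' you anticipate can simply be dropped from this proof. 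Only the strict monotonicity of $q\mapsto\theta^2(q)$ on $(-1,1)$, which underwrites the unique nome in Definition~\ref{def:starTanimoto}, genuinely needs to be recorded, and it is immediate from the series term by term.
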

\begin{theorem}[Tanimoto, \cite{tanimoto2007novel}] \label{thm:Tanimoto2}
	  {\it The operation $\star$ satisfies the following algebraic properties.}\\
	~{\bf (D)}  $a \star x = a \star y$ {\it implies} $x = y$ ({\it a cancellation law}). \\
	~{\bf (E)}  $(ax) \star (ay) = a \star (a(x \star y))$ {\it for any} $a, x, y$ ({\it a distributive law}).\\
	~{\bf (F)}  {\it If $z = x \star y$, then} $y  = x(x^{-1} \star (x^{-1}z))$. {\it In particular, the inverse of $x$ 
		with respect to the operation is} $x(x^{-1} \star x^{-1})$.
\end{theorem}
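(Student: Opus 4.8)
The plan is to reduce the whole theorem to a single compact identity linking $\star$ to the arithmetic--geometric mean, namely
\[
\AGM(1,\,x\star y)=\AGM(x,y)\qquad\text{for all } x,y>0,
\]
and then read off (D), (E), (F) as formal consequences of it, using only two classical facts about $\AGM$: its homogeneity $\AGM(\lambda x,\lambda y)=\lambda\,\AGM(x,y)$ for $\lambda>0$ (scaling both seeds scales every arithmetic and geometric mean in the defining iteration), and its strict monotonicity in each argument, so that $t\mapsto\AGM(1,t)$ is a strictly increasing, hence injective, self-map of $\mathbb{R}^{+}$.

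First I would establish the displayed identity from Theorem~\ref{thm:Tanimoto1} alone, without returning to the theta-function definition. Writing $z=x\star y$, property~(A) gives $z=1\star z$, and applying property~(C) to $1\star z$ yields $z=\AGM(1,z)\star\AGM(1,z)$, while applying~(C) directly to $x\star y$ gives $z=\AGM(x,y)\star\AGM(x,y)$. Equating the two expressions for $z$ and invoking the injectivity of $t\mapsto t\star t$ from property~(B) forces $\AGM(1,z)=\AGM(x,y)$, which is exactly the desired identity.

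With the identity in hand the three laws become short computations. For~(D), if $a\star x=a\star y$ then $\AGM(a,x)=\AGM(1,a\star x)=\AGM(1,a\star y)=\AGM(a,y)$, and strict monotonicity of $\AGM$ in its second argument gives $x=y$. For~(E), homogeneity gives $\AGM\bigl(1,(ax)\star(ay)\bigr)=\AGM(ax,ay)=a\,\AGM(x,y)$, whereas $\AGM\bigl(1,a\star(a(x\star y))\bigr)=\AGM\bigl(a,a(x\star y)\bigr)=a\,\AGM(1,x\star y)=a\,\AGM(x,y)$; since $t\mapsto\AGM(1,t)$ is injective the two sides coincide. For~(F), set $z=x\star y$ and $w=x^{-1}\star(x^{-1}z)$; homogeneity and the identity give $\AGM(1,w)=\AGM(x^{-1},x^{-1}z)=x^{-1}\AGM(1,z)=x^{-1}\AGM(x,y)$, and since $\AGM(1,x^{-1}y)=x^{-1}\AGM(x,y)$ as well, injectivity yields $w=x^{-1}y$, i.e.\ $y=x\,w=x\bigl(x^{-1}\star(x^{-1}z)\bigr)$. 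Taking $z=1$ (the $\star$-unit, by~(A)) specialises this to the inverse formula $x(x^{-1}\star x^{-1})$.

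Granting Theorem~\ref{thm:Tanimoto1}, none of this is hard; the only care needed is in recording the homogeneity and strict monotonicity of $\AGM$ in the present normalization. The genuinely substantial input, which I would either import from that theorem or re-derive directly, is the identity itself, and the hard part will be supplying a self-contained derivation straight from the theta definition. That route rests on Jacobi's duplication formulas $\theta^{2}(q^{2})=\tfrac12\bigl(\theta^{2}(q)+\theta^{2}(-q)\bigr)$ and $\theta^{2}(-q^{2})=\theta(q)\theta(-q)$ (using $\theta(-q)=\sum_{n}(-1)^{n}q^{n^{2}}$), which exhibit $\bigl(\theta^{2}(q),\theta^{2}(-q)\bigr)\mapsto\bigl(\theta^{2}(q^{2}),\theta^{2}(-q^{2})\bigr)$ as one $\AGM$ step and, on letting $q^{2^{n}}\to0$, give the classical normalization $\AGM\bigl(\theta^{2}(q),\theta^{2}(-q)\bigr)=1$; combined with homogeneity this recovers $\AGM(1,x\star y)=\AGM(x,y)$ together with the bijectivity of $q\mapsto\theta^{2}(q)$ that makes $\star$ well defined in the first place.
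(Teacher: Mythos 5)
The paper contains no proof of this theorem to compare against: it is quoted verbatim from Tanimoto \cite{tanimoto2007novel}, so your proposal can only be measured against Tanimoto's original argument, which works inside the theta-function parametrization throughout. Your route is correct and is a genuinely different, cleaner packaging. You isolate the single identity $\AGM(1,x\star y)=\AGM(x,y)$, derive it purely formally from (A), (B), (C) of Theorem~\ref{thm:Tanimoto1} (the two evaluations of $z=x\star y$ under (C) plus injectivity of $t\mapsto t\star t$ from (B) are exactly right), and then obtain (D), (E), (F) by combining it with homogeneity $\AGM(\lambda x,\lambda y)=\lambda\,\AGM(x,y)$ and injectivity of $t\mapsto \AGM(1,t)$; the three computations are sound, including the implicit use of commutativity of $\star$ (itself a consequence of your identity, since $\AGM(x,y)=\AGM(y,x)$) to make the inverse in (F) two-sided. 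Tanimoto instead re-enters the theta picture for each property, using the duplication formulas $2\theta^{2}(q^{2})=\theta^{2}(q)+\theta^{2}(-q)$ and $\theta^{2}(-q^{2})=\theta(q)\theta(-q)$ that you correctly defer to your final paragraph; what your reduction buys is that this analytic input is needed only once, to establish the normalization $\AGM\bigl(\theta^{2}(q),\theta^{2}(-q)\bigr)=1$ behind Theorem~\ref{thm:Tanimoto1}, after which (D)--(F) are pure algebra --- very much in the spirit of the present paper's Theorem~\ref{thm_main}, which likewise extracts the algebraic content of the Gauss functional equation.

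One refinement: the only unproved analytic input you invoke, strict monotonicity of $\AGM$ in each argument, can be dispensed with entirely. Injectivity of $t\mapsto\AGM(1,t)$ already follows from (A) and (C): if $\AGM(1,t)=\AGM(1,t')$, then $1\star t=\AGM(1,t)\star\AGM(1,t)=\AGM(1,t')\star\AGM(1,t')=1\star t'$, whence $t=t'$. The cancellation needed in (D), namely $\AGM(a,x)=\AGM(a,y)\implies x=y$, then follows from homogeneity, since $\AGM(1,x/a)=a^{-1}\AGM(a,x)=a^{-1}\AGM(a,y)=\AGM(1,y/a)$. With this substitution, your proof of (D)--(F) uses nothing about $\AGM$ beyond homogeneity (immediate from the defining iteration) and Theorem~\ref{thm:Tanimoto1} itself.
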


Finally, we note that Tanimoto claims that the $\star  $ operation is not associative (although, he does not give any example).\footnote{It can easily be verified that $ \AGM(\AGM(1,2), \AGM(3,4)) \neq \AGM(\AGM(1,3), \AGM(2,4)) $. Theorem \ref{thm_main}, then implies that $ \star $ is not associative.}

\section{Main results}
Now we are ready to prove our claim that there does not exist any group law $ * $, that is compatible with $ \AGM $ in the sense of the Def.~$ \ref{def:compatible} $.
In this direction, first we prove the following theorem.
\begin{theorem} \label{thm_main}
	Let $ m(x,y) $ be a binary operation defined over positive reals satisfying the following:
	\begin{enumerate} [label=(\subscript{M}{{\arabic*}})]
		\item \hspace{\myspace}       $ 	m(x,x) = x $,       \label{cond_one}
		\item \hspace{\myspace} $ 	m(x,y) = m(y,x) $,  \label{cond_two}
		\item \hspace{\myspace} $ 	m(x,y) = m(z,y) \implies x = z $, \label{cond_three}
		\item \hspace{\myspace} $ 	m(x,y) * m(x,y) = x * y $     \qquad (Gauss' Functional Equation),\label{cond_four}
		\item \hspace{\myspace} $ 	e*x = x $, \label{lem:idempotent} 
		\item \hspace{\myspace} $ x * x = y * y  \implies x = y. $ \label{cond_six}
	\end{enumerate}
	Then $ m $ is medial, i.e., $ m(m(x,y),m(z,u)) = m(m(x,z),m(y,u)) $ if and only if the $ * $ operation is associative.
\end{theorem}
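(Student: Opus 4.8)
The plan is to exploit the hypotheses to pin down the structure of $*$ completely: I will show that $*$ is commutative with two-sided identity $e$ and that the squaring map $s(t) := t*t$ is a \emph{bijection}. This forces $m$ to be entirely determined by $*$, and it converts the medial law for $m$ into the interchange (medial) law for $*$, from which associativity can be read off directly.

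First I would record the basic structure of $*$. Combining \ref{cond_two} with \ref{cond_four} gives $m(y,x)*m(y,x) = m(x,y)*m(x,y) = x*y$, while \ref{cond_four} applied to the pair $(y,x)$ gives $m(y,x)*m(y,x) = y*x$; hence $*$ is commutative, and together with this commutativity \ref{lem:idempotent} makes $e$ a two-sided identity. The map $s$ is injective by \ref{cond_six}, and it is surjective because $s(m(x,e)) = m(x,e)*m(x,e) = x*e = x$ for every $x$; thus $s$ is a bijection with inverse $f := s^{-1}$, and $f(e)=e$ since $e*e=e$. Finally, \ref{cond_four} reads $s(m(x,y)) = x*y$, so injectivity of $s$ yields the key identity $m(x,y) = f(x*y)$ for all $x,y$.

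Next I would translate the medial law. Because $m = f \circ {*}$ and $f$ is a bijection, cancelling the outer $f$ shows that $m(m(x,y),m(z,u)) = m(m(x,z),m(y,u))$ is equivalent to
\[
f(x*y)*f(z*u) = f(x*z)*f(y*u) \qquad (\star)
\]
for all $x,y,z,u$. For the forward direction, assuming $*$ associative makes $(S,*)$ a commutative semigroup; applying $s$ to each side of $(\star)$ and regrouping via $(p*q)*(p*q) = (p*p)*(q*q)$ reduces both sides to the common value $x*y*z*u$, so $(\star)$ follows from injectivity of $s$.

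The main work is the converse. Assuming mediality, i.e.\ $(\star)$, I would first specialize $z=u=e$ to obtain $f(x*y)*f(e) = f(x)*f(y)$, and since $f(e)=e$ this gives the multiplicativity relation $f(x*y) = f(x)*f(y)$. Substituting this back into $(\star)$ rewrites it as $(f(x)*f(y))*(f(z)*f(u)) = (f(x)*f(z))*(f(y)*f(u))$; since $f$ is onto, the arguments $f(x),f(y),f(z),f(u)$ range over all of $S$, yielding the full medial law $(p*q)*(r*t) = (p*r)*(q*t)$ for $*$. Setting $q=e$ and using that $e$ is a two-sided identity then collapses this to $p*(r*t)=(p*r)*t$, which is associativity. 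I expect the delicate point to be the bookkeeping that legitimately passes from the squaring-level identity to the genuine interchange law for $*$ --- that is, simultaneously exploiting the multiplicativity of $f$ and its surjectivity --- after which the final Eckmann--Hilton-style specialization $q=e$ is immediate.
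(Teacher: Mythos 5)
Your proof is correct, and much of it is the paper's argument in different clothing: your key identity $m(x,y)=f(x*y)$ is the paper's Lemma 1 (which states $m(x,y)=m(e,x*y)$, i.e.\ $f=m(e,\cdot)$), your multiplicativity relation $f(x*y)=f(x)*f(y)$ is exactly the paper's Lemma \ref{lem:cond_five} ($m(x,e)*m(e,y)=m(x,y)$), obtained by the very same specialization $z=u=e$, and your forward direction is the same squaring-and-regrouping computation followed by $(M_6)$. Where you genuinely diverge is the endgame of the converse. The paper specializes $x=e$ in the squared medial law, reduces both sides to $m(e,y*(z*u))$ and $m(e,u*(z*y))$ via its two lemmas, cancels with $(M_3)$, and rearranges by commutativity to get associativity. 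You instead observe that the squaring map $s(t)=t*t$ is surjective (since $s(m(x,e))=x*e=x$), hence a bijection with inverse $f$; this lets you promote the identity $f(x*y)*f(z*u)=f(x*z)*f(y*u)$ to the full interchange law $(p*q)*(r*t)=(p*r)*(q*t)$ for $*$, and conclude by the Eckmann--Hilton specialization $q=e$. Your route buys a cleaner conceptual picture --- mediality of $m$ literally \emph{is} the interchange law for $*$, transported through the bijection $f$ --- at the cost of invoking surjectivity of squaring, which the paper never needs; the paper's purely equational cancellation argument is more frugal with hypotheses and stays closer to the syntactic style that also powers its automated (Prover9) treatment of the Moufang/self-distributive analogue in Theorem \ref{thm:Moufang}, where a full interchange-law argument would not transfer as directly.
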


Before proving Theorem \ref{thm_main}, we state and prove the following lemmas.
\begin{lemma}
	Under the hypothesis $ (M_1) $-$ (M_6) $ of Theorem~\ref{thm_main}, we have the following results.
	\begin{enumerate}[]
			\item $ m(x,y) = m(e,x*y)$, \label{lem:tostar}
			\item $ x*y = y*x $. \label{lem:commute}
	\end{enumerate}
\end{lemma}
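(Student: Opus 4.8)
The plan is to derive both identities as direct, purely algebraic consequences of Gauss' functional equation $(M_4)$ together with the two cancellation hypotheses $(M_3)$ and $(M_6)$; no limiting or analytic fact about $\AGM$ enters, since at this stage $m$ is just an abstract operation satisfying $(M_1)$--$(M_6)$.

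For the first identity $m(x,y) = m(e, x*y)$, I would compare the $*$-squares of the two sides. Applying $(M_4)$ to the pair $(x,y)$ gives $m(x,y)*m(x,y) = x*y$. Applying $(M_4)$ instead to the pair $(e, x*y)$ gives $m(e,x*y)*m(e,x*y) = e*(x*y)$, and the left-identity law $(M_5)$ collapses the right-hand side to $x*y$. Thus $m(x,y)$ and $m(e,x*y)$ have equal $*$-squares, so the cancellation law $(M_6)$, namely $a*a = b*b \implies a=b$, forces $m(x,y) = m(e,x*y)$.

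For the second identity $x*y = y*x$, I would feed the first one into the commutativity of the mean. By $(M_2)$ we have $m(x,y) = m(y,x)$; rewriting each side through the identity just proved turns this into $m(e,x*y) = m(e,y*x)$. A further application of $(M_2)$ swaps the arguments to give $m(x*y,e) = m(y*x,e)$, and now the cancellation law $(M_3)$, applied with common second entry $e$, yields $x*y = y*x$.

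There is no serious obstacle here; the only point requiring care is keeping the two distinct cancellation laws apart. The law $(M_6)$ cancels equal $*$-squares and is the tool for the first identity, whereas $(M_3)$ cancels a common second argument inside $m$ and is the tool for the second. The conceptual content is simply that $(M_4)$ and $(M_5)$ let us replace the pair $(x,y)$ by the ``normalized'' pair $(e, x*y)$ without changing the value of the mean, after which commutativity of $*$ drops out of commutativity of $m$.
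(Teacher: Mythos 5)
Your proof is correct, and its two halves have different relationships to the paper's argument. For the first identity you follow exactly the paper's route: compare the $*$-squares of $m(x,y)$ and $m(e,x*y)$, reduce both to $x*y$ via $(M_4)$ and $(M_5)$, and cancel with $(M_6)$. For the second identity, however, you take a genuinely different path. You derive commutativity of $*$ from the first identity: rewrite $m(x,y)=m(y,x)$ as $m(e,x*y)=m(e,y*x)$, flip the arguments with $(M_2)$, and cancel the common second entry $e$ with $(M_3)$. The paper instead proves it in one line with no cancellation at all:
\begin{align*}
x*y = m(x,y)*m(x,y) = m(y,x)*m(y,x) = y*x,
\end{align*}
using only $(M_2)$ and $(M_4)$ --- commutativity of $m$ transfers to $*$ directly through the squares. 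Both arguments are valid; the paper's is more economical in hypotheses (it needs neither $(M_3)$ nor, through part 1, $(M_5)$ and $(M_6)$), while yours has the mild structural appeal of exhibiting part 2 as a corollary of part 1. The economy is worth noting because it records precisely which axioms each conclusion really depends on, which matters in an axiomatic setting like this one.
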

\begin{proof}
	The lemma follows from the following calculations.
	\begin{enumerate}[]
			\item We have
			\begin{align*}
			m(x,y)*m(x,y) &= x*y \qquad (\text{from  \ref{cond_four}})  \\
			&= e*(x*y) \qquad (\text{from  \ref{lem:idempotent}})  \\
			&= m(e,x*y)*m(e,x*y) \qquad (\text{from  \ref{cond_four}}) . 
			\end{align*}
			Now the result follows from  \ref{cond_six}.
			\item $ x*y = m(x,y)*m(x,y) = m(y,x)*m(y,x) = y*x .$
	\end{enumerate}
	
\end{proof}

\begin{lemma}\label{lem:cond_five}
	Assume the hypothesis $ (M_1) $-$ (M_6) $ of Theorem~\ref{thm_main}. Also assume either  $ * $ is associative, or $ m $ is medial. Then
	\begin{align} 
	   m(x,e) * m(e,y) = m(x,y). 
	\end{align}

\end{lemma}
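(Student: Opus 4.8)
The plan is to reduce the identity $m(x,e)*m(e,y)=m(x,y)$ to a statement about the ``$*$-square'' map $t\mapsto t*t$ and then dispatch the two hypotheses separately. By commutativity $(M_2)$ I may rewrite $m(x,e)$ as $m(e,x)$, so it suffices to prove $m(e,x)*m(e,y)=m(x,y)$. The common engine in both cases is Gauss' equation $(M_4)$ read as $m(a,b)*m(a,b)=a*b$, which converts a single $*$-product into the $*$-square of an $m$-value and back.

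In the medial case I would apply $(M_4)$ to the pair $\bigl(m(e,x),m(e,y)\bigr)$, obtaining
\[
m\bigl(m(e,x),m(e,y)\bigr)*m\bigl(m(e,x),m(e,y)\bigr)=m(e,x)*m(e,y).
\]
Mediality, applied to the four arguments $e,x,e,y$, then collapses the inner term, since $m(e,e)=e$ by $(M_1)$:
\[
m\bigl(m(e,x),m(e,y)\bigr)=m\bigl(m(e,e),m(x,y)\bigr)=m\bigl(e,m(x,y)\bigr).
\]
Taking the $*$-square of the right-hand side and using $(M_4)$ followed by $(M_5)$ gives $m\bigl(e,m(x,y)\bigr)*m\bigl(e,m(x,y)\bigr)=e*m(x,y)=m(x,y)$. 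Combining the two displays yields $m(e,x)*m(e,y)=m(x,y)$ at once, with no appeal to cancellation.

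In the associative case I would instead compute the $*$-square of each side and cancel. Using the commutativity of $*$ established in the preceding lemma together with associativity to regroup,
\[
\bigl(m(e,x)*m(e,y)\bigr)*\bigl(m(e,x)*m(e,y)\bigr)=\bigl(m(e,x)*m(e,x)\bigr)*\bigl(m(e,y)*m(e,y)\bigr)=x*y,
\]
where the last equality uses $(M_4)$ and $(M_5)$ via $m(e,x)*m(e,x)=e*x=x$ and likewise for $y$. Since $(M_4)$ also gives $m(x,y)*m(x,y)=x*y$, the two elements $m(e,x)*m(e,y)$ and $m(x,y)$ share the same $*$-square, so $(M_6)$ forces them to be equal.

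I expect the only delicate point to be bookkeeping rather than any genuine obstacle: in the associative case one must check that the regrouping invokes only commutativity and associativity, and in the medial case one must align the four arguments of mediality so that $m(e,e)$ --- and hence, via $(M_1)$, the factor $e$ --- actually appears. Neither case needs hypotheses beyond those listed, and indeed the medial case uses $(M_6)$ nowhere.
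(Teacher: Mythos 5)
Your proposal is correct and follows essentially the same strategy as the paper: in the associative case, square $m(e,x)*m(e,y)$, regroup via commutativity and associativity of $*$, and cancel with $(M_6)$; in the medial case, combine mediality with Gauss' equation $(M_4)$ and specialize at $e$. The only (minor) difference is that in the medial case you apply mediality to the arguments $e,x,e,y$ \emph{before} squaring, which lets you conclude directly via $(M_5)$, whereas the paper first derives the general identity $m(x,y)*m(z,u)=m(x,z)*m(y,u)$ and then specializes $z=u=e$, requiring an extra appeal to the commutativity of $*$ to remove the factor $e$ on the right.
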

\begin{proof}
	First we assume that $ * $ is associative. Then the desired conclusion follows from the following calculation and   \ref{cond_six}.
		\begin{align*}
		&(m(x,e) * m(e,y))*(m(x,e) * m(e,y)) \\
		&= m(x,e) * m(e,y)*m(x,e) * m(e,y)  \qquad (\text{from the associativity of $ * $})  \\
		&= m(x,e) * m(x,e)*m(e,y) * m(e,y)  \qquad (\text{from Lemma 1, part \ref{lem:commute}})  \\
		&= (m(x,e) * m(x,e))*(m(e,y) * m(e,y))  \qquad (\text{from the associativity of $ * $})  \\
		&=  (x*e)*(e*y) \qquad (\text{from  \ref{cond_four}})  \\
		&= x*y  \qquad (\text{from Lemma 1, part \ref{lem:commute} and  \ref{cond_four}})  \\
		& = m(x,y) *m(x,y) \qquad (\text{from  \ref{cond_four}})   
		\end{align*}  
Next we assume that $ m $ is medial, i.e., $ m(m(x,y),m(z,u)) = m(m(x,z),m(y,u)) $.
Then we have 
\begin{align}
	& \, \,m(m(x,y),m(z,u))* m(m(x,y),m(z,u)) \nonumber\\ 
	&= m(m(x,z),m(y,u)) * m(m(x,z),m(y,u)) \nonumber \\
	\implies & m(x,y) * m(z,u) = m(x,z)*m(y,u) \qquad (\text{from  \ref{cond_four}}) \label{eq_one}  \\ 
	\implies & m(x,y) * m(e,e) = m(x,e)*m(y,e)  \qquad (\text{put $ z=u=e $}) \nonumber \\
	\implies & m(x,y) * e = m(x,e)*m(e,y) \qquad (\text{from  \ref{cond_one} and \ref{cond_two}})  \nonumber \\
	\implies & m(x,y)  = m(x,e)*m(e,y) \qquad (\text{from  \ref{cond_two} and Lemma 1, \ref{lem:commute}})  \nonumber 
\end{align}

\end{proof}

\paragraph{\textbf{Proof of Theorem~\ref{thm_main}}}
\begin{proof}
	Assume that $ * $ is associative. Then
	\begin{align}
	& \, \,m(m(x,y),m(z,u))* m(m(x,y),m(z,u)) \nonumber \\
	&= m(x,y) * m(z,u) \qquad (\text{from  \ref{cond_four}})  \nonumber  \\
	& = (m(x,e)*m(e,y))*(m(z,e)*m(e,u)) \qquad (\text{from Lemma \ref{lem:cond_five}}) \nonumber \\
	&= m(x,e)*m(e,y)*m(z,e)*m(e,u) \qquad (\text{from the associativity of $ * $})  \nonumber \\
	&= m(x,e)*m(z,e)*m(e,y)*m(e,u) \qquad (\text{from Lemma 1, part \ref{lem:commute}})  \nonumber\\
	&= m(x,e)*m(e,z)*m(y,e)*m(e,u) \qquad (\text{from  \ref{cond_two}}) \nonumber \\
	&= (m(x,e)*m(e,z))*(m(y,e)*m(e,u)) \qquad (\text{from the associativity of $ * $}) \nonumber\\
	&= m(x,z)*m(y,u) \qquad (\text{from Lemma \ref{lem:cond_five}}) \nonumber \\
	&= m(m(x,z),m(y,u))* m(m(x,z),m(y,u)) \qquad (\text{from  \ref{cond_four}})  \nonumber. 
	\end{align}
	This proves one direction of the theorem, as  \ref{cond_six} now implies that $ m $ is medial, i.e., $ m(m(x,y),m(z,u)) = m(m(x,z),m(y,u)) $.\\
	
	Next to prove the other direction assume that $$ m(m(x,y),m(z,u)) = m(m(x,z),m(y,u)) .$$
	Then from Eq.~$ \eqref{eq_one} $ we have
	\begin{align*}
	m(x,y) * m(z,u) = m(x,u)*m(z,y).
	\end{align*}
	For $ x=e $, the above relation becomes
	\begin{align} \label{eq:basic1}
	m(e,y) * m(z,u) = m(e,u)*m(z,y).
	\end{align}
	Now, 
	\begin{align}\label{eq:basic2}
	m(e,y) * m(z,u)   =& m(e,y) *m(e,z*u) \qquad (\text{from Lemma 1, part \ref{lem:tostar} }) \nonumber\\
		=& m(y,e) *m(e,z*u) \qquad (\text{from  \ref{cond_two}})   \nonumber\\
	=& m(y,z*u)          \qquad (\text{from Lemma \ref{lem:cond_five}}) \nonumber \\
	=& m(e,y*(z*u)). \qquad (\text{from Lemma 1, part \ref{lem:tostar} }) 
	\end{align}
	Similarly,
	\begin{align} \label{eq:basic3}
	m(e,u) * m(z,y)   = m(e,u*(z*y)).
	\end{align}
	
	From Eq.~$ \eqref{eq:basic1}$, Eq.~$ \eqref{eq:basic2}$, and Eq.~$ \eqref{eq:basic3} $, we get $$m(e,y*(z*u)) =  m(e,u*(z*y)) .$$ 
	\begin{align*}
	m(e,y*(z*u)) &=  m(e,u*(z*y)) \\
	\implies y*(z*u) &= u*(z*y) \qquad (\text{from  \ref{cond_three}})   \\
 \implies    y*(z*u)	&=  u*(y*z)        \qquad (\text{from Lemma 1, part \ref{lem:commute} }) \\
  \implies    y*(z*u)	&=  (y*z)*u      \qquad (\text{from Lemma 1, part \ref{lem:commute} }) 
	\end{align*}

	This completes the proof.
	
\end{proof}

\begin{corollary} [of Theorem \ref{thm_main}]
There does not exist any group law $ * $, that is compatible with $ \AGM $.
\end{corollary}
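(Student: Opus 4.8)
The plan is to argue by contradiction, using Theorem~\ref{thm_main} as a black box. Suppose, for the sake of contradiction, that there is a group law $*$ on the positive reals that is compatible with $\AGM$ in the sense of Definition~\ref{def:compatible}, and set $m=\AGM$. My strategy is to check that the pair $(\AGM,*)$ satisfies all six hypotheses $(M_1)$–$(M_6)$ of Theorem~\ref{thm_main}. Once this is done, the associativity of the group law $*$ forces, by the theorem, the mean $m=\AGM$ to be medial, and this contradicts the numerical fact recorded in the footnote that
\[
\AGM(\AGM(1,2),\AGM(3,4)) \neq \AGM(\AGM(1,3),\AGM(2,4)),
\]
i.e.\ that $\AGM$ is \emph{not} medial. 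The corollary therefore reduces entirely to verifying the hypotheses.

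Most of them are immediate. Conditions $(M_1)$–$(M_3)$ hold because $\AGM$ is a genuine mean on $\mathbb{R}^+$: it is idempotent, symmetric, and strictly increasing in each argument, so the cancellation law $(M_3)$ is just strict monotonicity. Condition $(M_4)$ is precisely the assumed compatibility $\AGM(x,y)*\AGM(x,y)=x*y$, and condition $(M_5)$ is the existence of a two-sided identity $e$, which a group law supplies. I would also record at once that $*$ is commutative: by the second part of Lemma~1 this follows from $(M_2)$ and $(M_4)$ alone, so it is available without any circularity with $(M_6)$.

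The one hypothesis that requires genuine work — and the step I expect to be the main obstacle — is $(M_6)$, the implication $x*x=y*y \Rightarrow x=y$, i.e.\ injectivity of the squaring map $x\mapsto x*x$. Using commutativity this is equivalent to saying the abelian group $(\mathbb{R}^+,*)$ has no element of order two: if $x*x=y*y$ then $(x*y^{-1})*(x*y^{-1})=e$, so $(M_6)$ holds exactly when $r*r=e$ forces $r=e$. Concretely, feeding $(x,y,z,u)=(1,2,3,4)$ into the associative half of the theorem's proof already produces $A*A=B*B$ with $A=\AGM(\AGM(1,2),\AGM(3,4))\neq B=\AGM(\AGM(1,3),\AGM(2,4))$, so the existence of a compatible group law is equivalent to the existence of the nontrivial involution $A*B^{-1}$, and ruling this out is exactly $(M_6)$. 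The delicate point is that $(M_6)$ is \emph{not} a formal consequence of $(M_1)$–$(M_5)$ — a constant operation $x*y=c$ satisfies $(M_4)$ for every mean yet violates $(M_6)$ — so one must use the structure of $\AGM$ itself. Here I would exploit the betweenness of $\AGM$: compatibility gives $\AGM(r,e)*\AGM(r,e)=r$, so $\AGM(r,e)$ is a distinguished square root of $r$ lying strictly between $r$ and $e$ in the usual order, and iterating $x\mapsto\AGM(x,e)$ yields a sequence converging to $e$ whose terms would have to realize $2$-power torsion of unbounded order; I expect the contradiction to come from playing this analytic behaviour against the algebra, thereby excluding any nontrivial involution.

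Once $(M_1)$–$(M_6)$ are in place the argument closes immediately: $*$ is associative, so Theorem~\ref{thm_main} makes $\AGM$ medial, contradicting the displayed inequality, and hence no compatible group law exists. As a sanity check on the whole picture, note that Tanimoto's loop $\star$ satisfies the same six hypotheses — with $(M_5)$ and $(M_6)$ furnished by properties (A) and (B) — which is precisely why Theorem~\ref{thm_main} forces $\star$ to be non-associative.
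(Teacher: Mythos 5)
Your overall architecture is the same as the paper's: verify hypotheses $(M_1)$--$(M_6)$ of Theorem~\ref{thm_main} for a hypothetical compatible group law $*$, invoke the theorem to force $\AGM$ to be medial, and contradict the numerical fact that it is not. (The paper's own proof differs only cosmetically: it verifies $(M_3)$ via Tanimoto's properties (C) and (D) rather than by monotonicity, and it deduces non-mediality by applying Theorem~\ref{thm_main} to Tanimoto's non-associative loop $\star$, offering the direct computation only as an alternative.) Where you go beyond the paper is in flagging $(M_6)$ as a genuine hypothesis that a group law does not automatically satisfy; the paper passes over this silently, asserting that Theorem~\ref{thm_main} excludes ``any $*$ which is associative and satisfies $(M_4)$--$(M_6)$'' and then concluding that no group law exists. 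You are right that ``group law $\Rightarrow (M_6)$'' is not formal: for a commutative $*$ (and commutativity does follow from $(M_2)$ and $(M_4)$), $(M_6)$ says precisely that the group has no $2$-torsion. Your constant-operation witness is not quite right, though, since $x*y=c$ also violates $(M_5)$; a correct witness is the Pr\"ufer $2$-group, where doubling is surjective but not injective, and a symmetric, idempotent choice of halves yields an operation satisfying $(M_1)$--$(M_5)$ but not $(M_6)$.

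However, your plan for closing this gap does not work as sketched, so your proof is incomplete at exactly the step you identified as critical. From a nontrivial involution $r$ you correctly produce, by iterating $x\mapsto\AGM(x,e)$, elements of $2$-power order of unbounded exponent converging to $e$; but that is not by itself contradictory: abelian groups with unbounded $2$-power torsion exist (the Pr\"ufer group again), and nothing in the hypotheses makes $*$ continuous, so playing convergence against the algebra has no evident endgame. The correct fix uses surjectivity rather than betweenness. Set $s(z)=z*z$ and $t(x)=\AGM(e,x)$. Compatibility together with $(M_5)$ gives $s(t(x))=e*x=x$ for all $x$, i.e.\ $s\circ t=\mathrm{id}$. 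Now $t$ is continuous and strictly increasing, $t(x)\geq\sqrt{ex}\to\infty$ as $x\to\infty$, and $t(x)\to 0$ as $x\to 0^{+}$ (bound $\AGM(e,x)$ above by the $n$-th arithmetic iterate, which tends to $e/2^{n}$); hence $t$ is a bijection of the positive reals. A left inverse of a bijection is its two-sided inverse, so $s=t^{-1}$ is injective, and injectivity of $s$ is exactly $(M_6)$: if $x*x=y*y$, write $x=t(a)$, $y=t(b)$ and apply $s$ to get $a=b$, hence $x=y$. With this lemma inserted, your argument (and, for that matter, the paper's) is complete.
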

\begin{proof}
From the definition of $ \AGM $, it is obvious that $ \AGM(x,x) =x $ and $ \AGM(x,y) = \AGM(y,x) $. Further, if $\AGM(x,y) = \AGM(x,z)  $, then $$ \AGM(x,y) \star \AGM(x,y) = \AGM(x,z) \star \AGM(x,z) \implies x \star y = x \star z  \implies y = z,$$ from Theorem~$ \ref{thm:Tanimoto1} $\,(C) and Theorem~$ \ref{thm:Tanimoto2} $\,(D).
Therefore, $ \AGM $ is a  mean operation in the sense of Def.~$ \ref{def:mean} $.
Further, the $ \star $ operation defined by Tanimoto (see Def.~$ \ref{def:starTanimoto} $) is not associative, and moreover, $ \AGM $ and $ \star $ satisfy the hypothesis of Theorem~$ \ref{thm_main} $, from the definition of $ \star $, and by virtues of Theorem~$ \ref{thm:Tanimoto1} $ and Theorem~$ \ref{thm:Tanimoto2} $ (we note that in \cite{tanimoto2007novel}, our identity element $ e  $ is represented by $ 1 $). Therefore, from Theorem~$ \ref{thm_main} $, it follows that $ \AGM $ is not medial (alternatively, from a direct numerical computation it can be verified that $ \AGM $ is not medial).  But then, Theorem~$ \ref{thm_main} $ also implies that $ \AGM $ can not be compatible with any $ * $ operation which is associative and satisfies \ref{cond_four}-\ref{cond_six}. Therefore, there can not exist any group law $ * $, that is compatible with $ \AGM $. 
\end{proof}

Suppose for a mean $ m $, if $ m(m(x,y),m(x,z)) = m(x,m(x,z)) $, then the mean $ m $ is said to be self-distributive. If $ (x*x)*(y*z) = x*y*(x*z) $ then $ * $ is  called Moufang.

It is easy to see that in the above proofs, the full force of associativity (or, for that matter the medial law)  is not used. Indeed, `associativity' and `medial' in  Theorem~\ref{thm_main}, can be replaced by `Moufang' and `self-distributive', respectively and the proof of the theorem still remains valid.

\begin{theorem} \label{thm:Moufang}
	For a mean $ m $ and satisfying $ (M_1) $-$ (M_6) $ of Theorem~\ref{thm_main},
	$ m $ is self-distributive, i.e.,  $ m(m(x,y),m(x,z)) = m(x,m(x,z)) $ if and only if the $ * $ operation is Moufang.
\end{theorem}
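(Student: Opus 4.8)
The plan is to reuse the ``squaring'' device underlying Theorem~\ref{thm_main}: the Gauss equation \ref{cond_four} together with the cancellation law \ref{cond_six} let one pass freely between an identity in $m$ and the identity obtained by squaring both sides with respect to $*$, since $A=B$ if and only if $A*A=B*B$. The whole argument is organized around the map $\sqrt{t}:=m(e,t)$, which by \ref{cond_four} and \ref{lem:idempotent} satisfies $\sqrt{t}*\sqrt{t}=e*t=t$; thus $\sqrt{\,\cdot\,}$ is a square-root function on the positive reals, injective by \ref{cond_six} and inverted by $a\mapsto a*a$.

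First I would reduce the self-distributive law to a $*$-identity. Squaring both sides of $m(m(x,y),m(x,z))=m(x,m(y,z))$ and applying \ref{cond_four} to each side turns it into $m(x,y)*m(x,z)=x*m(y,z)$; conversely \ref{cond_six} recovers the original law from this one. Call this intermediate relation $(\star)$. It therefore suffices to show that $(\star)$ holds if and only if $*$ is Moufang.

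Next I would establish the analogue of Lemma~\ref{lem:cond_five}, namely $m(y,z)=\sqrt{y}*\sqrt{z}$, under each of the two hypotheses. Assuming $(\star)$, this is immediate: set $x=e$ and use \ref{lem:idempotent}. Assuming instead the Moufang law $(u*u)*(v*w)=(u*v)*(u*w)$, take $u=\sqrt{x}$ and $v=w=\sqrt{y}$; since $\sqrt{x}*\sqrt{x}=x$ and $\sqrt{y}*\sqrt{y}=y$ this reads $x*y=(\sqrt{x}*\sqrt{y})^2=m(x,y)^2$, and \ref{cond_six} gives $m(x,y)=\sqrt{x}*\sqrt{y}$. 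These are precisely the computations that replace the appeals to associativity and mediality in the original argument, and they collapse to a single line each.

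Finally, substituting $m(y,z)=\sqrt{y}*\sqrt{z}$, $m(x,y)=\sqrt{x}*\sqrt{y}$, $m(x,z)=\sqrt{x}*\sqrt{z}$ and $x=\sqrt{x}*\sqrt{x}$ into $(\star)$ makes it read $(\sqrt{x}*\sqrt{x})*(\sqrt{y}*\sqrt{z})=(\sqrt{x}*\sqrt{y})*(\sqrt{x}*\sqrt{z})$, which is exactly the Moufang law evaluated at $(\sqrt{x},\sqrt{y},\sqrt{z})$; reading the chain backward, for arbitrary $a,b,c$ one sets $x=a*a$, $y=b*b$, $z=c*c$ so that $\sqrt{x}=a$, $\sqrt{y}=b$, $\sqrt{z}=c$, recovering Moufang for all triples. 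The one point that needs care is this quantifier bookkeeping with $\sqrt{\,\cdot\,}$ --- ensuring it is everywhere defined and surjective onto the positive reals, so that ``$(\star)$ for all $x,y,z$'' and ``Moufang for all $a,b,c$'' are genuinely equivalent --- but that is exactly what \ref{cond_four}, \ref{lem:idempotent}, and \ref{cond_six} provide. No step requires the full strength of associativity or mediality, which is why the Moufang/self-distributive weakening goes through verbatim.
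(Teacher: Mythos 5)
Your proof is correct and is essentially the paper's own argument: the paper establishes this theorem only by remarking that the proof of Theorem~\ref{thm_main} carries over with ``associative'' replaced by ``Moufang'' and ``medial'' by ``self-distributive'' (deferring details to the Prover9 traces in the appendix), and your key identity $m(x,y)=m(e,x)*m(e,y)=\sqrt{x}*\sqrt{y}$ is precisely the required analogue of Lemma~\ref{lem:cond_five}, while your squaring reductions appear verbatim in those traces (your intermediate relation $m(x,y)*m(x,z)=x*m(y,z)$ is clause 22 of the second trace, and your $m(x*x,y*y)=x*y$ is clause 132 of the first). One further point in your favor: you correctly read the self-distributive law as $m(m(x,y),m(x,z))=m(x,m(y,z))$, silently repairing the typo in the theorem statement (whose right-hand side reads $m(x,m(x,z))$), in agreement with the paper's own Prover9 goal clause.
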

One can easily verify (for example by using Mathematica) that 
$$ \AGM(\AGM(1,2),\AGM(1,3)) \neq \AGM(1,\AGM(2,3)) .$$
Hence, Gauss' Functional Equation for $ \AGM $ can not be solved even among Moufang loops.

Although, we have remarked earlier that the proof of Theorem~\ref{thm:Moufang} follows on the same line as Theorem~\ref{thm_main}, we are enclosing an automated proof of this theorem by using Prover9 \cite{prover9}, in the Appendix, for readers interested in automated reasoning.\\

\section{Appendix}
\noindent\textbf{Moufang identity implies self-distrtibutivity.}
\begin{verbatim}
1 m(x,m(y,z)) = m(m(x,y),m(x,z)) # label(goal).  [].
3 m(x,y) = m(y,x).  [].
5 m(x,y) * m(x,y) = x * y.  [].
6 x * x != y * y | x = y.  [].
7 x * e = x.  [].
8 (x * y) * (x * z) = (x * x) * (y * z).  [].
9 m(m(c1,c2),m(c1,c3)) != m(c1,m(c2,c3)).  [1].
10 m(c1,m(c2,c3)) != m(m(c1,c2),m(c1,c3)).  [9].
15 m(x,y) * m(y,x) = y * x.  [3,5].
16 x * y = y * x.  [3,5,15].
17 x * y != z * z | m(x,y) = z.  [5,6].
23 c1 * m(c2,c3) != m(c1,c2) * m(c1,c3).  [6,10,5,5].
32 c1 * m(c3,c2) != m(c1,c2) * m(c1,c3).  [3,23].
48 e * x = x.  [16,7].
50 (x * y) * (z * x) = (x * x) * (y * z).  [16,8].
79 c1 * m(c3,c2) != m(c2,c1) * m(c1,c3).  [3,32].
130 m(e,x * x) = x.  [17,48].
132 m(x * x,y * y) = x * y.  [17,8].
160 m(e,x * y) = m(x,y).  [5,130].
221 c1 * m(c3,c2) != m(c1,c3) * m(c2,c1).  [16,79].
293 m(x * y,z * z) = m(x,y) * z.  [5,132].
294 m(x * x,y * z) = x * m(y,z).  [5,132].
662 m(x * y,z * x) = x * m(y,z).  [50,160,160,294].
1706 m(x * y,z * u) = m(x,y) * m(z,u).  [5,293].
1748 m(x,y) * m(z,x) = x * m(y,z).  [662,1706].
1749 $F.  [1748,221].
\end{verbatim}

\noindent\textbf{Self-distributivity implies Moufang identity.}
\begin{verbatim}
1 (x * y) * (x * z) = (x * x) * (y * z) # label(non_clause) # label(goal).  [].
2 m(x,x) = x.  [].
3 m(x,y) = m(y,x).  [].
4 m(x,y) != m(z,y) | x = z.  [].
5 m(x,y) * m(x,y) = x * y.  [].
6 x * x != y * y | x = y.  [].
7 x * e = x.  [].
8 m(x,m(y,z)) = m(m(x,y),m(x,z)).  [].
9 m(m(x,y),m(x,z)) = m(x,m(y,z)).  [8].
10 (c1 * c2) * (c1 * c3) != (c1 * c1) * (c2 * c3).  [1].
13 m(x,y) != m(z,x) | y = z.  [3,4].
15 m(x,y) * m(y,x) = y * x.  [3,5].
16 x * y = y * x.  [3,5,15].
17 x * y != z * z | m(x,y) = z.  [5,6].
22 m(x,y) * m(x,z) = x * m(y,z).  [9,5,9,5].
24 e * x = x.  [16,7].
26 m(x,y) != m(x,z) | y = z.  [3,13].
29 m(e,x * x) = x.  [17,24].
33 x * x != y | m(e,y) = x.  [24,17].
41 m(e,x) != y | y * y = x.  [29,26].
55 x != y | y * y = x * x.  [29,41].
56 m(e,x * y) = m(x,y).  [33,22,2].
58 m(x * x,y) = x * m(e,y).  [29,22,24].
68 m(x,e) != m(y,z) | y * z = x.  [56,13].
74 m(x,y) * m(e,z) = m(x * y,z).  [56,22,24].
79 m(x,y * y) = y * m(e,x).  [58,3].
80 m(x * x,y) = x * m(y,e).  [3,58].
99 m(x * x,y * z) = x * m(y,z).  [56,58].
134 m(x,y * y) = y * m(x,e).  [3,79].
153 m(x,x * y) = x * m(y,e).  [80,22,22,2,3].
220 m(x * x,y) = m(x,x * y).  [153,80].
225 m(x,y * y) = m(y,y * x).  [153,134].
240 m(x,x * (y * z)) = x * m(y,z).  [99,220].
338 x * (y * y) = y * (y * x).  [55,225,22,2,22,2].
427 (x * x) * y = x * (x * y).  [338,16].
448 (c1 * c2) * (c1 * c3) != c1 * (c1 * (c2 * c3)).  [10,427].
504 m(c1 * c2,c1 * c3) != c1 * m(c2,c3).  [68,448,3,56,240].
550 m(x,y) * m(z,u) = m(x * y,z * u).  [56,74].
568 m(x * y,x * z) = x * m(y,z).  [22,550].
569 $F.  [568,504].
\end{verbatim}

\bibliographystyle{plain}

%


\begin{thebibliography}{1}
	
	\bibitem{almkvist1988gauss}
	Gert Almkvist and Bruce Berndt.
	\newblock Gauss, {L}anden, {R}amanujan, the {A}rithmetic-{G}eometric {M}ean,
	{E}llipses, $\pi$, and the {L}adies {D}iary.
	\newblock {\em The American Mathematical Monthly}, 95(7):585--608, 1988.
	
	\bibitem{cox1997arithmetic}
	David~A Cox.
	\newblock The {A}rithmetic-{G}eometric {M}ean of {G}auss.
	\newblock In {\em Pi: A source book}, pages 481--536. Springer, 1997.
	
	\bibitem{prover9}
	W.~W. McCune.
	\newblock Prover9 and {M}ace4.
	\newblock Available at \url{"http://www.cs.unm.edu/~mccune/prover9/"}, version
	1.6.0.
	
	\bibitem{tanimoto2007noveljp}
	Shinji Tanimoto.
	\newblock A novel operation associated with {G}auss' arithmetic-geometric means
	({J}apanese).
	\newblock {\em Sugaku 49: 300–301}, 1997.
	
	\bibitem{tanimoto2007novel}
	Shinji Tanimoto.
	\newblock A novel operation associated with {G}auss' arithmetic-geometric
	means.
	\newblock {\em arXiv preprint arXiv:0708.3521}, 2007.
	
\end{thebibliography}

\end{document}